\DeclareMathOperator{\ord}{ord}
\DeclareMathOperator{\supp}{supp}
\DeclareMathOperator{\diag}{diag}
\newcommand{\bra}[1]{\langle#1\rangle}
\newtheorem{Thm}{Theorem}
\newtheorem*{Thm*}{Theorem}
\newtheorem{Lm}{Lemma}
\newtheorem{Cor}{Corollary}
\title{Spectrality of Prime Size Tiles}
\author{Weiqi Zhou\thanks{zwq@xzit.edu.cn}}
\affil{\small School of Mathematics and Statistics, Xuzhou University of Technology \\  {\footnotesize Lishui Road 2, Yunlong District, Xuzhou, Jiangsu Province, China 221018}}
\date{}							
\begin{document}
\maketitle
\begin{abstract}
We prove that if a tile in $\mathbb Z^d$ has prime size $p$, then it must be spectral. The proof is by contradiction, it is simply shown that the tiling complement of such a tile can not annihilate all $p$-subgroups. In addition, with a simple transformation we prove that any $p$ points in general linear positions in $\mathbb Z^d (d\ge p-1)$ must be both tiling and spectral. \\

{\noindent
{\bf Keywords}: Fuglede conjecture; tiling sets; spectral sets. \\[1ex]
{\bf 2020 MSC}: 42A99; 05B45}
\end{abstract}
 
\section{Introduction}
Let $G$ be a finitely generated Abelian group, a set $A$ is \emph{tiling} (or is a \emph{tile}) in $G$ if there exists some $B$ such that $\{A+b\}_{b\in B}$ forms a partition of $G$, and is \emph{spectral} if there exists a set $S$ of group characters of $G$ that forms an orthogonal basis on $L^2(A)$. In such cases $B$ (resp. $S$) is said to be a \emph{tiling complement} (resp. \emph{spectrum}) of $A$, while $(A,B)$ (resp. $(A,S)$) is called a tiling pair (resp. spectral pair) in $G$, and we often write $G=A\oplus B$. 

Fuglede conjectured that being spectral and being tiling are actually equivalent \cite{fuglede1974}, this has been studied extensively and disproved in general when the dimension is at least three \cite{farkas2006, matolcsi2006, kolounzakis2006, matolcsi2005, tao2004} but remains open for one dimensional and two dimensional cases. Researches over the Fuglede conjecture in finite Abelian groups as well as in $p$-adic fields have been active in the past years, see e.g., \cite{fallon2022, fan2016, iosevich2017, laba2022, laba2025, wang1997, malikiosis2022, zhang2021} (the full list is of course too long to be enumerated here, this is only an attempt to mention some of the results in one dimensional or two dimensional cases, see perhaps references therein for more on the history and development of this problem).

It is well known that a tiling set of prime size in $\mathbb Z^d$ must have at least one periodic tiling complement \cite{szegedy1998} (it may be worthing mentioning that being periodic in $d$ dimensions means having $d$ linearly independent periods, while having periods in one direction is called weakly periodic instead, readers may also find the periodic tiling conjecture and its positive and negative cases in e.g., \cite{bhattacharya2020, tao2021, tao2024, wang1996} relevant). The aim of this short note is to establish the following statements (Theorem \ref{ThmMain} and \ref{ThmMain2} in the last section):

\begin{Thm*}
Let $p$ be a prime number and $A\subset\mathbb Z^d$ with $|A|=p$, if $A$ tiles $\mathbb Z^d$, then it is also spectral.
\end{Thm*}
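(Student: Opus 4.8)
The plan is to reduce everything to a finite abelian group and then write down a spectrum by hand. Since a tile of prime size in $\mathbb Z^{d}$ has a periodic tiling complement (as recalled in the introduction), there is a finite-index sublattice $\Lambda\subset\mathbb Z^{d}$ and a tiling $\mathbb Z^{d}=A\oplus B$ with $B+\Lambda=B$; replacing $\Lambda$ by a smaller finite-index sublattice (which cannot destroy the periodicity of $B$) I may moreover assume that $A$ injects into $H:=\mathbb Z^{d}/\Lambda$, since $A$ has only finitely many nonzero differences to avoid. Reducing modulo $\Lambda$ then yields a tiling $H=\overline A\oplus\overline{B_{0}}$ of the finite abelian group $H$, where $B_{0}\subset B$ is a transversal of $\Lambda$ and $|\overline A|=|A|=p$; and because the characters of $H$ are exactly the characters of $\mathbb Z^{d}$ trivial on $\Lambda$ and $A\to\overline A$ is a bijection, any spectrum of $\overline A$ in $H$ pulls back verbatim to a spectrum of $A$ in $\mathbb Z^{d}$. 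So it suffices to prove: if $A\oplus B=H$ with $H$ finite abelian and $|A|=p$, then $A$ is spectral. Write $F_{A}(\chi)=\sum_{a\in A}\chi(a)$ for $\chi\in\widehat H$, and similarly $F_{B}$; the tiling is then equivalent to $F_{A}(\chi)F_{B}(\chi)=0$ for every nontrivial $\chi\in\widehat H$.

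The conceptual core is the following spectrality criterion: \emph{if some character $\chi$ of $p$-power order satisfies $F_{A}(\chi)=0$, then $A$ is spectral, with spectrum $\{1,\chi,\chi^{2},\dots,\chi^{p-1}\}$.} These $p$ characters are distinct because $\ord(\chi)\ge p$; and for $0\le i<j\le p-1$ the ratio $\chi^{i}\overline{\chi^{j}}$ equals $\chi^{k}$ for some integer $k$ coprime to $p$. Since each value $\chi(a)$ is a root of unity of $p$-power order, all of them lie in some $\mathbb Q(\zeta_{p^{b}})$, so applying the Galois automorphism $\zeta_{p^{b}}\mapsto\zeta_{p^{b}}^{k}$ to the relation $\sum_{a\in A}\chi(a)=0$ gives $\sum_{a\in A}\chi(a)^{k}=F_{A}(\chi^{k})=0$. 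Hence every pairwise ratio of the proposed spectrum lies in the zero set of $F_{A}$, so these $p$ characters are orthogonal on $L^{2}(A)$, and they form an orthogonal basis because $\dim L^{2}(A)=|A|=p$. (This is exactly where primality is used: the scheme produces precisely $p$ mutually orthogonal characters.)

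It remains to check the hypothesis of the criterion always holds. Assume not: then $F_{A}(\chi)\ne 0$ for every $\chi$ in the Sylow $p$-subgroup $P\le\widehat H$, so by the tiling equation $F_{B}$ vanishes on $P\setminus\{1\}$ — that is, the complement $B$ would annihilate every $p$-subgroup of $\widehat H$. But $P=(H_{p'})^{\perp}$, where $H_{p'}\le H$ is the subgroup of elements of order prime to $p$, so $F_{B}$ vanishing on $P\setminus\{1\}$ forces the push-forward of the indicator of $B$ to $H/H_{p'}$ to be constant; in particular $|H_{p}|=|H/H_{p'}|$ divides $|B|=|H|/p$, which is impossible on comparing $p$-adic valuations, since $v_{p}(|H_{p}|)=v_{p}(|H|)>v_{p}(|H|)-1=v_{p}(|B|)$. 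This contradiction proves the theorem. I expect the only points needing real care to be the bookkeeping in the descent to $H$ (injectivity of $A$, and that lifted characters genuinely give a spectrum) and the Galois step in the criterion; the final divisibility contradiction is then immediate once it is recognized.
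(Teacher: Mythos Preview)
Your proof is correct and follows essentially the same approach as the paper: reduce to a finite group via periodicity, observe that a single character $\chi$ of $p$-power order in the zero set of $A$ yields a spectrum $\{1,\chi,\dots,\chi^{p-1}\}$ via the Galois/cyclotomic argument (this is the content of the paper's Lemmas~\ref{LmEquiv} and~\ref{LmDerived}), and if no such character exists derive a divisibility contradiction by showing the projection of $B$ to the $p$-part of the group is uniformly distributed (the paper's Lemmas~\ref{LmMinimal} and~\ref{LmProj}). Your packaging via Sylow subgroups and push-forwards is slightly more coordinate-free than the paper's explicit projection to $\mathbb Z_{p^k}^d$ and invocation of the uncertainty principle, but the substance is identical.
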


\begin{Thm*}
If $p$ is a prime number, then any $p$ points in general linear positions in $\mathbb Z^d (d\ge p-1)$ must be both tiling and spectral.
\end{Thm*}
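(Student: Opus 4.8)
The plan is to reduce an arbitrary configuration of $p$ affinely independent integer points to the standard simplex sitting inside a rank-$(p-1)$ sublattice, tile there by an explicit modular map, push that tiling out to all of $\mathbb Z^d$, and then read off spectrality from the prime-size theorem just proved. First I would translate so that $A=\{0,v_1,\dots,v_{p-1}\}$. For $p$ points with $d\ge p-1$, being in general linear position means precisely that they are affinely independent, i.e.\ that $v_1,\dots,v_{p-1}$ are linearly independent over $\mathbb Q$ --- and the hypothesis $d\ge p-1$ is exactly what allows such configurations to exist at all. Then the sublattice $L:=\mathbb Z v_1+\dots+\mathbb Z v_{p-1}$ is free of rank $p-1$ with the $v_i$ as a $\mathbb Z$-basis, so under the identification $L\cong\mathbb Z^{p-1}$, $v_i\mapsto e_i$, the set $A$ is carried to $A_0:=\{0,e_1,\dots,e_{p-1}\}$. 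Since translation and group isomorphisms transport tilings, it suffices to tile $\mathbb Z^{p-1}$ by $A_0$ and then recover a tiling of the ambient $\mathbb Z^d$.

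To tile $\mathbb Z^{p-1}$ by $A_0$, consider the surjective homomorphism $\phi\colon\mathbb Z^{p-1}\to\mathbb Z/p$, $\phi(x_1,\dots,x_{p-1})=\sum_{i=1}^{p-1} i\,x_i \bmod p$; its values on $A_0$ are $0,1,\dots,p-1$, pairwise distinct, so $A_0$ is a complete set of coset representatives for $\mathbb Z^{p-1}/\ker\phi$, i.e.\ $\mathbb Z^{p-1}=A_0\oplus\ker\phi$. Writing $H:=\ker\phi\subseteq L$ and choosing a transversal $T$ of $\mathbb Z^d/L$, a short coset computation then promotes $L=A_0\oplus H$ to $\mathbb Z^d=A\oplus(H+T)$, with the representation $a+(h+t)$ unique: reducing modulo $L$ determines $t$, and the resulting identity inside $L$ determines $a$ and $h$. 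Transporting back through the identification of the first step, $A$ tiles $\mathbb Z^d$.

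Finally, $A\subset\mathbb Z^d$ now satisfies $|A|=p$ and tiles $\mathbb Z^d$, so Theorem~\ref{ThmMain} immediately yields that $A$ is spectral, which completes the proof. If one wishes to avoid invoking that theorem here, one can argue directly: the subgroup complement $H$ of $A_0$ in $\mathbb Z^{p-1}$ furnishes the spectrum $H^{\perp}$ for $A_0$, and since the restriction map $\widehat{\mathbb Z^d}\to\widehat{L}$ is surjective, a spectrum of $A$ lying in $\widehat{L}$ lifts to a spectrum in $\widehat{\mathbb Z^d}$.

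None of the ingredients is deep, so there is no single hard step; the point that most deserves care is the passage from a tiling of the sublattice $L$ to a tiling of the whole group $\mathbb Z^d$ --- the former does not a priori give the latter, and it is precisely the coset-transversal bookkeeping above that legitimises the appeal to the prime-size theorem. A minor but genuine subtlety, worth flagging, is pinning the notion of general linear position down to $\mathbb Q$-linear independence of the difference vectors, without which the reduction in the first step could collapse.
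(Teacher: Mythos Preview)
Your proof is correct and shares the paper's overall plan---reduce to a tiling statement and then invoke Theorem~\ref{ThmMain}---but the execution differs in two places. First, the paper produces a group automorphism of the ambient $\mathbb Z^{p-1}$ (Lemma~\ref{LmMap}) that carries $A$ to the specific set $\{0,e_1,2e_2,\ldots,(p-1)e_{p-1}\}$, whereas you work entirely inside the sublattice $L=\mathbb Z v_1+\cdots+\mathbb Z v_{p-1}$ and only afterwards extend the tiling to $\mathbb Z^d$ via a coset transversal; your route sidesteps the need to manufacture an automorphism of the full lattice. Second, to tile the standard set the paper appeals to the Fourier basis of $\mathbb Z_p^{p-1}$ over $\mathbb F_p$ (Lemma~\ref{LmSimplex}) to exhibit a subgroup complement, while your homomorphism $\phi(x)=\sum_i i\,x_i\bmod p$ produces the same subgroup complement by a one-line coset argument. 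The two constructions are essentially dual---the paper puts the weights $1,2,\ldots,p-1$ into the set and pairs against $(1,\ldots,1)$, you put them into the linear functional and keep the set as $\{0,e_1,\ldots,e_{p-1}\}$---but yours is the more elementary, avoiding any appeal to the multiplicative structure of $\mathbb F_p$. Your closing remark that $H^{\perp}$ already gives a spectrum directly is a nice bonus: it makes the argument self-contained without passing through Theorem~\ref{ThmMain}.
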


Here $p$ points being in \emph{general linear positions} means that they are not contained in any hyperplane or affine hyperplane of dimension $p-2$, alternatively this means if we fix any one of these $p$ points, then the $p-1$ vectors formed from the fixed point to the other $p-1$ points are linearly independent in $\mathbb Z^d$, therefore this necessarily requires $d\ge p-1$. The second theorem leads to some interesting findings, for example, it follows that any $3$ points in $\mathbb Z^d (d>1)$ must be both spectral and tiling if they are not on the same line. It is also worth mentioning that $p$ linearly independent points must be tiling is already included in \cite[Corollary 2.4]{matolcsi2005}. 

The proof of the first theorem applies several techniques: First we reduce the problem to $\mathbb Z_n^d$ by using periodicity, then we produce a size $p$ spectrum from $p$-subgroups of $\mathbb Z_n^d$. The construction involves dissecting the group into equivalence classes formed by generators of cyclic subgroups, from each equivalence class we can squeeze out a derived set of size $p$ whose difference set is still in the same class. The existence of a spectrum within one of these classes is then shown by contradiction: Projecting onto $\mathbb Z_{p^k}^d$ (where $p^k$ is the largest power of $p$ that divides $n$) and invoking the uncertainty principle, we are able to show that if the tiling complement annihilates all $p$-subgroups (which will be the case if $A$ annihilates none), then its size must be divisible by $p^{kd}$ (a statement that is similar to the Coven-Meyerowitz T1 property in \cite{coven1999}), thus leading to a contradiction if $A$ does not admit a spectrum.

The proof of the second theorem then reduces to proving any $p$ points in general linear positions are tiling, for which we can simply show that they tile the sublattice they generate. This follows from the fact that the set formed by the origin and $(1,0\ldots,0)$, $(0,2,0,\ldots,0)$, $\ldots$, $(0,\ldots,0,p-1)$ is tiling (a tiling complement can be found explicitly by computation), then we can map this set to the $p$ points we are looking at by a linear transform.

\section{Preliminaries}
Let $f$ be a function on $\mathbb Z_n^d$, then its Fourier transform on $\mathbb Z_n^d$ is 
$$\hat f(\xi)=\sum_{x\in \mathbb Z_n^d}f(x)\cdot e^{2\pi i\bra{x,\xi}/n}, \quad \xi\in\mathbb Z_n^d.$$
Denote by 
$$\supp(f)=\{x\in\mathbb Z_n^d: f(x)\neq 0\}, \quad Z(f)=\{x\in\mathbb Z_n^d: f(x)=0\},$$
the support and the zero set of $f$ respectively, then the \emph{uncertainty principle} on finite Abelian groups \cite{donoho1989, smith1990} asserts that
\begin{equation} \label{EqUncertainty}
|\supp(f)|\cdot |\supp(\widehat{f})|\ge |\mathbb Z_n^d|,
\end{equation}
holds for any non-zero function $f$ defined on $\mathbb Z_n^d$.

If $A$ is a finite multiset (i.e., a set that allows repetitive elements) on $\mathbb Z_n^d$ (i.e., each element of $A$ is a member of $\mathbb Z_n^d$), and the multiplicity of an element $a\in A$ (i.e., the number of copies of $a$ in $A$) is denoted by $m_a$, then the characteristic function on such a multiset $A$ can be defined and written as
$$\mathbf 1_A(x)=\begin{cases}m_x & x\in A, \\ 0 & x\notin A. \end{cases}$$
Its Fourier transform on $\mathbb Z_n^d$ is 
$$\widehat{\mathbf 1}_A(\xi)=\sum_{a\in A}m_a\cdot e^{2\pi i\bra{a,\xi}/n}.$$

If $A,B$ are finite multisets, and $A+B$ is the multiset obtained by enumerating $a\in A, b\in B$ (so $a$ appears $m_a$ times and $b$ appears $m_b$ times) and take the collection of all sums $a+b$, then straightforward computation shows
$$\widehat{\mathbf 1}_{A+B}=\widehat{\mathbf 1}_A\cdot \widehat{\mathbf 1}_B.$$

With these settings we easily get that when $A,B,S$ are usual sets (every element appear only once), then
\begin{align}
(A,S) \text{ is a spectral pair on } \mathbb Z_n^d &\quad\Leftrightarrow\quad  \begin{cases}|A|=|S|, \\ \Delta S\subseteq Z(\widehat{\mathbf 1}_A), \end{cases} \\
(A,B) \text{ is a tiling pair on } \mathbb Z_n^d &\quad\Leftrightarrow\quad \begin{cases}|A|\cdot|B|=n^d, \\ \Delta \mathbb Z_n^d\subseteq Z(\widehat{\mathbf 1}_A)\cup Z(\widehat{\mathbf 1}_B),\end{cases} \label{EqTile} 
\end{align}
where the difference set $\Delta S$ is defined as
$$\Delta S=\{s-s': s,s'\in S,\; s\neq s'\}.$$

One may also verify the \emph{Poisson summation formula} below:
\begin{equation} \label{EqPoisson}
\widehat{\mathbf 1_H}=|H|\cdot\mathbf 1_{H^{\perp}},
\end{equation}
where $H\lhd \mathbb Z_n^d$ is a subgroup and 
$$H^{\perp}=\{x\in\mathbb Z_n^d: \bra{x,h}=0, \forall h\in H\},$$
is its \emph{orthogonal group} in $\mathbb Z_n^d$. \eqref{EqPoisson} can be first established on cyclic subgroups (which is quite trivial), then since non-cyclic subgroups are products of cyclic ones, its orthogonal set is simply the intersection over orthogonal sets of its cyclic factors.

\begin{Lm} \label{LmMinimal}
Let $A$ be a multiset on $\mathbb Z_n^d$, if $\mathbb Z_n^d\setminus\{0\}\subseteq Z(\widehat{\mathbf 1}_A)$, then $\mathbf 1_A$ must be a scalar multiple of $\mathbf 1_{\mathbb Z_n^d}$, i.e., $\mathbf 1_A=m\mathbf 1_{\mathbb Z_n^d}$ for some $m\in\mathbb N$.
\end{Lm}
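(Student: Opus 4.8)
The plan is to recover $\mathbf 1_A$ from its Fourier transform and exploit that the hypothesis annihilates every nonzero frequency; I read the hypothesis of the lemma as $\Delta\mathbb Z_n^d\subseteq Z(\widehat{\mathbf 1_A})$, the form consistent with \eqref{EqTile} and with the stated conclusion. First I would record the harmless reformulation $\Delta\mathbb Z_n^d=\mathbb Z_n^d\setminus\{0\}$: every nonzero $v\in\mathbb Z_n^d$ equals $v-0$ with $v\neq 0$. Hence the hypothesis says precisely that $\widehat{\mathbf 1_A}(\xi)=0$ for all $\xi\neq 0$, i.e., $\supp(\widehat{\mathbf 1_A})\subseteq\{0\}$.

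Next I would apply Fourier inversion on $\mathbb Z_n^d$: for every $x$,
\[
\mathbf 1_A(x)=\frac1{n^d}\sum_{\xi\in\mathbb Z_n^d}\widehat{\mathbf 1_A}(\xi)\,e^{-2\pi i\bra{x,\xi}/n}=\frac1{n^d}\,\widehat{\mathbf 1_A}(0)=\frac{|A|}{n^d},
\]
since all terms with $\xi\neq 0$ vanish and $\widehat{\mathbf 1_A}(0)=\sum_{a\in A}\mul(a)=|A|$. If one prefers to avoid inversion, the same conclusion follows from injectivity of the Fourier transform together with the Poisson formula \eqref{EqPoisson}: for $H=\mathbb Z_n^d$ one has $H^\perp=\{0\}$, so the function $g:=\mathbf 1_A-\tfrac{|A|}{n^d}\mathbf 1_{\mathbb Z_n^d}$ satisfies $\widehat g(\xi)=\widehat{\mathbf 1_A}(\xi)-|A|\,\mathbf 1_{\{0\}}(\xi)$, which is $0$ at $\xi=0$ (as $\widehat{\mathbf 1_A}(0)=|A|$) and $0$ at every $\xi\neq 0$ (by hypothesis); thus $\widehat g\equiv 0$ and therefore $g\equiv 0$, giving $\mathbf 1_A\equiv|A|/n^d$. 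One could even use the uncertainty principle \eqref{EqUncertainty} for that last implication, since a nonzero $g$ would have $|\supp(\widehat g)|\ge n^d/|\supp(g)|\ge 1$, contradicting $\widehat g\equiv 0$.

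Finally I would settle the integrality point: $\mathbf 1_A$ takes values in the nonnegative integers because it counts multiplicities of a multiset, so the constant value $|A|/n^d$ is a nonnegative integer $m$, and $\mathbf 1_A=m\,\mathbf 1_{\mathbb Z_n^d}$ as claimed.

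There is essentially no obstacle here: the statement is a one-line consequence of the injectivity of the Fourier transform on $\mathbb Z_n^d$ (equivalently, of inversion, or of Poisson plus the uncertainty principle). The only two things that deserve a remark are the trivial identification $\Delta\mathbb Z_n^d=\mathbb Z_n^d\setminus\{0\}$ and the observation that the multiset structure of $A$ forces the constant $|A|/n^d$ to be an integer; everything else is routine.
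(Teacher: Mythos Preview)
Your proof is correct, and you rightly read the hypothesis as $\Delta\mathbb Z_n^d\subseteq Z(\widehat{\mathbf 1_A})$ (the paper has a typo there). Your route, however, differs from the paper's. The paper argues by contradiction: assuming $\mathbf 1_A$ is not a scalar multiple of $\mathbf 1_{\mathbb Z_n^d}$, it subtracts off the minimum multiplicity to form $f=\mathbf 1_A-m\,\mathbf 1_{\mathbb Z_n^d}$, observes that $0<|\supp(f)|<|\mathbb Z_n^d|$ while $\supp(\widehat f)\subseteq\{0\}$, and derives a contradiction from the uncertainty principle \eqref{EqUncertainty}. You instead go straight through Fourier inversion (or injectivity), concluding in one line that $\mathbf 1_A\equiv |A|/n^d$ and then noting integrality. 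Your argument is shorter and more elementary, relying only on inversion rather than on the uncertainty principle; the paper's version has the minor advantage of staying within the toolkit it has explicitly set up (Poisson and uncertainty), but your direct approach is cleaner and arguably preferable.
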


\begin{proof}
Assume the contrary that $\mathbf 1_A$ is not a scalar multiple of $\mathbf 1_{\mathbb Z_n^d}$, then let $m$ be the smallest multiplicity of elements in $A$, i.e.,
$$m=\min\{m_a: a\in A\},$$
and consider
$$f=\mathbf 1_{A}-m\mathbf 1_{\mathbb Z_n^d}.$$
With this construction we would have $f(a)=0$ for all $a$ whose multiplicity in $A$ is $m$ and $f$ is not identically $0$ (since $\mathbf 1_A$ is assumed not to be a scalar multiple of $\mathbf 1_{\mathbb Z_n^d}$). Therefore we get
\begin{equation} \label{EqSuppF}
0<|\supp(f)|<|\mathbb Z_n^d|,
\end{equation}

On the other hand, it is straightforward to check that
$$Z(\widehat{\mathbf 1}_{\mathbb Z_n^d})=\mathbb Z_n^d\setminus\{0\},$$
thus combined with the assumption that $\mathbb Z_n^d\setminus\{0\}\subseteq Z(\widehat{\mathbf 1}_A)$ we get
$$\mathbb Z_n^d\setminus\{0\}\subseteq Z(\widehat f),$$
i.e.,
$$\supp(\widehat f)=\{0\}.$$
Together with \eqref{EqSuppF} this leads to
$$|\supp(f)|\cdot |\supp(\widehat{f})|=|\supp(f)|\cdot 1=|\supp(f)|<|\mathbb Z_n^d|,$$
which contradicts the uncertainty principle in \eqref{EqUncertainty}. 
\end{proof}

For any $x=(x_1,\ldots,x_d)\in\mathbb Z^d$, set
$$\pi_n(x)=(x_1\bmod n, \; \ldots, \; x_d\bmod n),$$
so that $\pi_n$ is the projection from $\mathbb Z^d$ to $\mathbb Z_n^d$ (as a map between sets). Given a finite set $A\subseteq\mathbb Z^d$, $\pi_n(A)$ will denote the multiset obtained by applying $\pi_n$ on every element of $A$. Clearly $\pi_n(A)$ will only be a usual set if (and only if) the map $A\mapsto \pi_n(A)$ is injective. 

\begin{Lm}[\cite{szegedy1998}] \label{LmPeriodicity}
Let $p$ be a prime number and $A\subset\mathbb Z^d$ with $|A|=p$, if $A\oplus B=\mathbb Z^d$, then there is some $n\in\mathbb N$ and $B'\in\mathbb Z_n^d$ such that $A\mapsto \pi_n(A)$ is injective and $\pi_n(A)\oplus B'=\mathbb Z_n^d$.
\end{Lm}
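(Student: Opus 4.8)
The plan is to deduce the lemma from the structural input recorded in \cite{szegedy1998}, namely that a tile of prime size in $\mathbb Z^d$ always possesses a \emph{periodic} tiling complement — one invariant under a finite index (hence full rank) subgroup $L\lhd\mathbb Z^d$ — and then to descend to a finite group by a routine argument. Granting that input, fix a tiling $A\oplus B_0=\mathbb Z^d$ with $B_0+L=B_0$. Since $L$ has finite index it contains $m\mathbb Z^d$ for $m=[\mathbb Z^d:L]$, so $B_0$ is already $m\mathbb Z^d$-periodic; pick $n$ to be any multiple of $m$ with $n>\max_{a,a'\in A}\|a-a'\|_\infty$. Then $\pi_n$ is injective on $A$, the set $B_0$ remains $n\mathbb Z^d$-periodic, and — since each coset of $n\mathbb Z^d$ is either contained in $B_0$ or disjoint from it — the image $B':=\pi_n(B_0)$ is a genuine subset of $\mathbb Z_n^d$. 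I claim that $\pi_n(A)\oplus B'=\mathbb Z_n^d$, which finishes the proof.

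Checking the claim is routine. Surjectivity: any $\xi\in\mathbb Z_n^d$ lifts to some $x\in\mathbb Z^d$, and a decomposition $x=a+b$ with $a\in A$, $b\in B_0$ reduces modulo $n$ to $\xi=\pi_n(a)+\pi_n(b)$. Disjointness of the translates $\pi_n(A)+b'$ ($b'\in B'$): if $a+b\equiv a'+b'\pmod{n\mathbb Z^d}$ with $a,a'\in A$ and $b,b'\in B_0$, then $a+b-a'\in b'+n\mathbb Z^d\subseteq B_0$ by periodicity, so $a+b=a'+(a+b-a')$ is a second representation of $a+b$ in the form $A\oplus B_0$; uniqueness of that tiling gives $a=a'$, whence $b=a+b-a'\equiv b'\pmod{n\mathbb Z^d}$, i.e.\ $\pi_n(a)=\pi_n(a')$ and $\pi_n(b)=\pi_n(b')$.

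All the weight therefore rests on the periodicity input, and primality of $|A|$ is genuinely needed there (for general finite clusters periodicity can fail, cf.\ \cite{tao2024}). The route I would take to that input is first to strip the ``flat'' directions of $A$: let $\Lambda$ be the subgroup generated by $A-A$ and $r=\operatorname{rank}\Lambda$; after an automorphism of $\mathbb Z^d$ one may assume $A$ lies in a single coset of a rank-$r$ direct summand $\mathbb Z^r\times\{0\}^{d-r}$, so the tiling splits along the remaining $d-r$ coordinates into independent tilings of copies of $\mathbb Z^r$, and a further splitting along the cosets of $\Lambda$ inside $\mathbb Z^r$ reduces to the \emph{non-degenerate} case $\bra{A-A}=\mathbb Z^r$; there necessarily $r\le p-1$, as the $p-1$ vectors $a-a_0$ ($a\in A$, $a\ne a_0$) generate $\mathbb Z^r$ in that case. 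Reassembling a periodic complement for the non-degenerate pieces into one for $\mathbb Z^d$ is mechanical, and the case $r=1$ is classical (and contained in \cite{szegedy1998}). The hard part — the only real obstacle — is the non-degenerate case $2\le r\le p-1$: one must rule out aperiodic behaviour, and this is exactly where primality is exploited in \cite{szegedy1998}, through a pigeonhole over the finitely many local patterns a bounded cluster can impose, primality excluding the unbounded ``mixing'' that an aperiodic tiling would require. For the purposes of this note it suffices to cite \cite{szegedy1998} for that step.
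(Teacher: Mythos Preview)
Your proposal is correct and follows essentially the same route as the paper: the paper does not prove this lemma either, but simply cites \cite[Theorem 17]{szegedy1998} for the existence of a periodic complement and then remarks in one sentence that if $(t_1,0,\ldots,0),\ldots,(0,\ldots,0,t_d)$ are periods of $B$ one may take $n$ to be their least common multiple and $B'=\pi_n(B)$ with duplicates removed. Your write-up is more careful (you verify the tiling of $\mathbb Z_n^d$ explicitly and sketch the reduction behind Szegedy's periodicity input), but the logical structure is identical; note incidentally that the extra size condition $n>\max_{a,a'}\|a-a'\|_\infty$ is unnecessary, since injectivity of $\pi_n$ on $A$ already follows from your disjointness argument applied with $b=b'$.
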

The way that Lemma \ref{LmPeriodicity} is stated here is slightly different from its original version in \cite[Theorem 17]{szegedy1998}, but it is clear that if $\{(t_1,0,\ldots,0),\ldots,(0,\ldots,0,t_d)\}$ is a set of periods of $B$, then the least common multiple of $t_1,\ldots,t_d$ can be taken as $n$, and $B'$ can be obtained by removing all duplications from $\pi_n(B)$.

\begin{Lm} \label{LmProj}
Let $n=p^km$ with $p$ being a prime number that does not divide $m$, and $A\subseteq \mathbb Z_n^d$. Set $A'=\pi_{p^k}(A)$, and let $f,g$ both be characteristic functions on $A'$, but defined on $\mathbb Z_n^d$ and $\mathbb Z_{p^k}^d$ respectively so that the Fourier transforms
$$\widehat{\mathbf 1}_A(x)=\sum_{a\in A}e^{2\pi i\bra{a,\xi}/n}, \quad \hat f(\xi)=\sum_{a'\in A'}m_{a'}e^{2\pi i\bra{a',\xi}/n}, \quad \xi\in\mathbb Z_n^d,$$
take place in $\mathbb Z_n^d$, while  
$$\hat g(\xi')=\sum_{a'\in A'}m_{a'}e^{2\pi i\bra{a',\xi'}/p^k}, \quad \xi'\in \mathbb Z_{p^k}^d,$$
is performed in $\mathbb Z_{p^k}^d$. If $x=(x_1,\ldots,x_d)\in\mathbb Z_n^d$ is an element of order $p^t$ for some $1\le t\le k$ (which means every coordinate $x_1,\ldots,x_d$ must be divisible by $m$), then we have
$$\widehat{\mathbf 1}_A(x)=\widehat{f}(x)=\widehat{g}(x'),$$
where $x'=(x_1/m,\ldots,x_d/m)\in\mathbb Z_{p^k}^d$.
\end{Lm}

\begin{proof}
Every $a=(a_1,\ldots,a_d)\in A$ can be written as
$$a=p^kq_a+r_a,$$
where $q_a$, $r_a$ respectively collect quotients and remainders of $a_1,\ldots, a_d$ divided by $p^k$. In particular we have
$$A'=\pi_{p^k}(A)=\{r_a: a\in A\}.$$

Then direct computation shows that 
\begin{align*}
\widehat{\mathbf 1_A}(x)=\sum_{a\in A}e^{2\pi i\bra{a,x}/n}=\sum_{a\in A}e^{2\pi i\bra{p^kq_a+r_a,mx'}/n}&=\sum_{a\in A}e^{2\pi i\bra{r_a,mx'}/n}, \\
&=\sum_{a\in A}e^{2\pi i\bra{r_a,x}/n} \left(=\widehat{f}(x)\right), \\
&=\sum_{a\in A}e^{2\pi i\bra{r_a,x'}/p^k} \left(=\widehat{g}(x')\right), \\
\end{align*}
which is the desired result.
\end{proof}

Given a finite Abelian group $G$, let $\sim$ be the equivalence relation on $G$ so that $g\sim g'$ if $g$ and $g'$ generate the same cyclic subgroup in $G$. If $E$ is an equivalence class under $\sim$, then we define the order of $E$, denoted by $\ord(E)$, to be the order of any element (as a member of the group $G$) in $E$ (by definition all elements in the same class will have the same order). 

\begin{Lm}\label{LmEquiv}
Let $E$ be an equivalence class under $\sim$ in $\mathbb Z_n^d$ and $A\subseteq \mathbb Z_n^d$. If $Z(\widehat{\mathbf 1}_A)\cap E\neq\emptyset$, then $E$ is completely contained in $Z(\widehat{\mathbf 1}_A)$, i.e., $E\subseteq Z(\widehat{\mathbf 1}_A)$.
\end{Lm}

\begin{proof}
Set $m=\ord(E)$ and $r=n/m$. Let $x$ be an arbitrary element of $E$, then $x=rx'$ for some $x'\in\mathbb Z_m^d$, consequently
$$\widehat{\mathbf 1}_A(x)=\sum_{a\in A}e^{2\pi i\bra{a,x}/n}=\sum_{a\in A}e^{2\pi i\bra{a,x'}/m}.$$
We shall view the above expression as the polynomial 
$$P(z)=\sum_{a\in A}z^{\bra{a,x'}},$$
evaluated at the $m$-th root of unity $\omega=e^{2\pi i/m}$, i.e.,
$$\widehat{\mathbf 1}_A(x)=P(\omega).$$
If $x\in Z(\widehat{1_A})$, then $P(\omega)=0$, which means $P(z)$ is divisible by the $m$-th cyclotomic polynomial $\Phi_m(z)$. On the other hand, if $y$ is another element of $E$, then by the definition of $E$ we must have $y=\lambda x$ for some $\lambda$ that is coprime to $m$, thus
$$\widehat{\mathbf 1}_A(y)=\sum_{a\in A}e^{2\pi i\bra{a,\lambda x}/n}=\sum_{a\in A}e^{2\pi i\lambda\bra{a,x}/n}=\sum_{a\in A}e^{2\pi i\lambda\bra{a,x'}/m}=P(\omega^\lambda)=0,$$  
where the last equality holds since $\lambda$ is coprime to $m$, and so that $\omega^\lambda$ is also a root of $\Phi_m(z)$. This finishes the proof since $x,y$ are chosen arbitrarily from $E$.
\end{proof}

A set $E'$ will be called a \emph{derived set} of $E$ if 
$$E'\subseteq E\cup\{0\}, \quad \Delta E'\subseteq E.$$

\begin{Lm}\label{LmDerived}
Let $E$ be an equivalence class under $\sim$ in some finite Abelian group. If $p$ is the smallest prime divisor of $\ord(E)$, then $E$ admits a derived set of size $p$. 
\end{Lm}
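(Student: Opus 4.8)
The plan is to make $E$ concrete and then write down the derived set directly. By definition all members of $E$ generate one and the same cyclic subgroup $C\lhd G$, and $\ord(E)=m$ where $m=|C|$ (so implicitly $m\ge 2$, since otherwise $m$ has no prime divisor). Hence $E$ is exactly the set of generators of $C$: fixing one generator $g$, we have $E=\{\lambda g:\ 1\le\lambda\le m,\ \gcd(\lambda,m)=1\}$, and $|E|=\varphi(m)$.

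With this description in hand I would simply take
$$E'=\{0,\,g,\,2g,\,\ldots,\,(p-1)g\}.$$
Three points then need to be verified. First, $|E'|=p$: the listed elements are pairwise distinct because $g$ has order $m$ and $m\ge p>p-1$ (as $p\mid m$). Second, $E'\subseteq E\cup\{0\}$: for each $j$ with $1\le j\le p-1$ every prime factor of $j$ is strictly less than $p$, hence — because $p$ is the \emph{smallest} prime divisor of $m$ — coprime to $m$; therefore $\gcd(j,m)=1$ and $jg$ is again a generator of $C$, i.e. $jg\in E$. Third, $\Delta E'\subseteq E$: a nonzero difference of two elements of $E'$ has the form $\pm kg$ with $1\le k\le p-1$, and the same coprimality observation gives $\gcd(k,m)=1$, so $\pm kg\in E$.

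The only use of the hypothesis — and the one step worth flagging — is precisely that $1,2,\ldots,p-1$ are all coprime to $m$, which is exactly what "$p$ is the smallest prime divisor of $\ord(E)$" delivers; it also explains why $p$ is the natural size to aim for. Everything else is bookkeeping, so I do not anticipate a genuine obstacle here; the proof is essentially the one-line construction above together with these three routine checks.
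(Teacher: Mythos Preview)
Your proposal is correct and is essentially the paper's own argument: both take a generator $g$ of the common cyclic subgroup and set $E'=\{0,g,2g,\ldots,(p-1)g\}$, then use that every integer in $\{1,\ldots,p-1\}$ is coprime to $\ord(E)$ because $p$ is its smallest prime factor. The paper phrases the verification via an explicit isomorphism $\langle g\rangle\to\mathbb Z_m$, while you check coprimality directly, but the content is identical.
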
 

\begin{proof}
Let $h$ be an arbitrary element of $E$, then we claim that
$$E'=\{0,h,2h,\ldots,(p-1)h\},$$
is a desired derived set. Indeed, that $|E'|=p$ is obvious, to see $\Delta E'\subseteq E$, set $n=\ord(E)$ and simply consider the group isomorphism from the cyclic group generated by $h$ to $\mathbb Z_n$ given by $\varphi: h\mapsto 1$. Let $F$ be the equivalence class formed by taking all generators of $\mathbb Z_n$, then $F$ consists of all numbers that are coprime to $n$ in $\mathbb Z_n$ and by this construction we have $\varphi(E)=F$. Now set $F'=\{0,1,\ldots,p-1\}$, then $\Delta F'=\{\pm 1,\ldots,\pm(p-1)\}$, clearly every member of $\Delta F'$ is coprime to $n$ since $p$ is the smallest divisor of $n$. Therefore $\Delta F'\subseteq F$, apply $\varphi^{-1}$ at both sides we get $\Delta E'\subseteq E$.
\end{proof}

In particular, Lemma \ref{LmDerived} holds for $\ord(E)$ being a power of $p$, which is how it will be applied later in this article.

\section{Main results}
\begin{Thm} \label{ThmMain}
Let $p$ be a prime number and $A\subset\mathbb Z^d$ with $|A|=p$, if $A$ tiles $\mathbb Z^d$, then it is also spectral.
\end{Thm}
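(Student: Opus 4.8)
The plan is a proof by contradiction: I would assume that $A$ tiles $\mathbb Z^d$ but is \emph{not} spectral, and derive a numerical impossibility for the size of a tiling complement.

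\emph{Reduction to a finite group.} First I would apply Lemma \ref{LmPeriodicity} to pass from $\mathbb Z^d$ to $\mathbb Z_n^d$: one gets $n\in\mathbb N$ and a genuine set $B'\subseteq\mathbb Z_n^d$ such that $\pi_n$ is injective on $A$ and $A'\oplus B'=\mathbb Z_n^d$, where $A'=\pi_n(A)$ and $|A'|=p$. The tiling relation forces $|A'|\cdot|B'|=|\mathbb Z_n^d|=n^d$, so $p\mid n$; write $n=p^km$ with $k\ge 1$ and $p\nmid m$. I would also record that spectrality descends: any size-$p$ spectrum $S$ of $A'$ in $\mathbb Z_n^d$ produces the orthogonal family $\{x\mapsto e^{2\pi i\bra{s,x}/n}\}_{s\in S}$ on $L^2(A)$ — indeed $\bra{a,s}\bmod n$ depends only on $\pi_n(a)$ and $\pi_n$ is injective on $A$, so the right cardinality and orthogonality are preserved — and hence $A$ would be spectral. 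So it suffices to build a size-$p$ spectrum for $A'$ in $\mathbb Z_n^d$.

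\emph{The dichotomy.} The crux is the following alternative. Suppose $A'$ annihilates some $x\in\mathbb Z_n^d$ of order $p^t$ with $t\ge 1$, and let $E$ be the $\sim$-class of $x$, so $\ord(E)=p^t$. By Lemma \ref{LmEquiv}, $A'$ annihilates all of $E$; by Lemma \ref{LmDerived} (with $p$ the only, hence smallest, prime divisor of $\ord(E)$) the class $E$ admits a derived set $E'$ with $|E'|=p$, so that $\Delta E'\subseteq E\subseteq Z(\widehat{\mathbf 1_{A'}})$ and $|E'|=p=|A'|$; thus $(A',E')$ is a spectral pair in $\mathbb Z_n^d$ and we are done by the previous paragraph. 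Consequently I may assume from now on that $A'$ annihilates \emph{no} nonzero element of $p$-power order.

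\emph{The complement annihilates the $p$-Sylow subgroup, and the size obstruction.} Now I bring in the tiling criterion \eqref{EqTile}, namely $\Delta\mathbb Z_n^d\subseteq Z(\widehat{\mathbf 1_{A'}})\cup Z(\widehat{\mathbf 1_{B'}})$. Every nonzero element of the $p$-Sylow subgroup $P=(m\mathbb Z_n)^d\cong\mathbb Z_{p^k}^d$ has $p$-power order, so by the previous step it lies in $Z(\widehat{\mathbf 1_{B'}})$. Applying Lemma \ref{LmProj} to $B'$ (lifted to $\mathbb Z^d$ via a choice of representatives), with $g=\mathbf 1_{\pi_{p^k}(B')}$ on $\mathbb Z_{p^k}^d$ and $x=mx'$, this gives $\widehat g(x')=\widehat{\mathbf 1_{B'}}(x)=0$ for every $x'\ne 0$, i.e., $\widehat g$ is supported at the origin. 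Lemma \ref{LmMinimal} then forces $g$ to be constant on $\mathbb Z_{p^k}^d$, whence $p^{kd}\mid|B'|$. But $|B'|=n^d/p=p^{kd-1}m^d$ has $p$-adic valuation exactly $kd-1<kd$ (as $d\ge 1$ and $p\nmid m$), a contradiction. Therefore $A$ must be spectral.

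\emph{Main obstacle.} The step I expect to need the most care is the dichotomy together with its bookkeeping: checking that the derived set furnished by Lemma \ref{LmDerived} really satisfies the spectral-pair criterion for $A'$ verbatim and that this finite spectrum lifts faithfully to a spectrum of $A$ in $\mathbb Z^d$; and, on the other side, that Lemma \ref{LmProj} may legitimately be applied to $B'$ so that the uncertainty-principle content of Lemma \ref{LmMinimal} becomes available in $\mathbb Z_{p^k}^d$. Once these are in place, the closing arithmetic ($p^{kd}\nmid n^d/p$) is immediate.
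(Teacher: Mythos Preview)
Your proposal is correct and follows essentially the same approach as the paper: reduce to $\mathbb Z_n^d$ via Lemma~\ref{LmPeriodicity}, use the Lemma~\ref{LmEquiv}/\ref{LmDerived} dichotomy on $p$-power equivalence classes to either produce a size-$p$ spectrum for $A'$ or force $B'$ to annihilate all such classes, then apply Lemma~\ref{LmProj} and Lemma~\ref{LmMinimal} to conclude $p^{kd}\mid|B'|$ and reach the same divisibility contradiction. Your write-up is in fact slightly more explicit than the paper's in spelling out the spectrality transfer from $A'$ back to $A$ and in computing the $p$-adic valuation of $|B'|=p^{kd-1}m^d$ directly.
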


\begin{proof}
Let $n$ and $B'$ be as asserted in Lemma \ref{LmPeriodicity}, and set $A'=\pi_n(A)$, so that we have $A'\oplus B'=\mathbb Z_n^d$. It suffices to show that $A'$ is spectral in $\mathbb Z_n^d$.

$A'$ being a tile in $\mathbb Z_n^d$ implies that $p$ divides $n^d$. Since $p$ is prime this also implies that $p$ divides $n$. Let $m$ be the number so that $n=p^km$ and $\gcd(p,m)=1$.

Consider all equivalence classes in $\mathbb Z_n^d$ whose orders are powers of $p$, by Lemma \ref{LmEquiv}, each equivalence class must be either completely inside $Z(\widehat{\mathbf 1}_{A'})$ or completely disjoint with $Z(\widehat{\mathbf 1}_{A'})$.

By Lemma \ref{LmDerived}, each of these equivalence classes has a derived set of size $p$, thus if any of these classes is in $Z(\widehat{\mathbf 1}_{A'})$, then the corresponding derived set would be a spectrum of $A'$ and we are done.

To see this must be the case, assume the contrary that all equivalence classes whose orders are powers of $p$ are disjoint with $Z(\widehat{\mathbf 1}_{A'})$, then by \eqref{EqTile} they must be annihilated by $\widehat{\mathbf 1}_{B'}$. By Lemma \ref{LmProj} this further implies 
\begin{equation} \label{EqPK}
\Delta \mathbb Z_{p^k}^d\subseteq Z(\widehat{\mathbf 1}_{\pi_{p^k}(B')}).
\end{equation}
Indeed, every element $x'\in \Delta\mathbb Z_{p^k}^d$ can be lifted to an element of the same order in $\mathbb Z_n^d$ through the map $x'\mapsto mx'$, and Lemma \ref{LmProj} asserts that 
\begin{equation} \label{EqB}
\widehat{\mathbf 1}_{\pi_{p^k}(B')}(x')=\widehat{\mathbf 1}_{B'}(mx').
\end{equation}
The order of $x'$ in $\mathbb Z_{p^k}^d$ is obviously a power of $p$, thus the order of $mx'$ in $\mathbb Z_n^d$ is also a power of $p$ (since it equals the order of $x'$ in $\mathbb Z_{p^k}^d$), which means by assumption the right-hand side of \eqref{EqB} must vanish. Repeating the argument on every member of $\Delta\mathbb Z_{p^k}^d$ leads to \eqref{EqPK}.

Now as $\pi_{p^k}(B')$ is a multiset on $\mathbb Z_{p^k}^d$, Lemma \ref{LmMinimal} indicates that $\mathbf 1_{\pi_{p^k}(B')}$ is just a scalar multiple of $\mathbf 1_{\mathbb Z_{p^k}^d}$, which means $|Z_{p^k}^d|$ divides $|\pi_{p^k}(B')|$. Since $|\pi_{p^k}(B')|=|B'|$, this further indicates that $p^{kd}$ divides $|B'|$, consequently we would have $p^{kd}m^d=|\mathbb Z_n^d|=|A|\cdot |B|$ is divisible by $p^{kd+1}$, which is a contradiction.
\end{proof}

\begin{Lm} \label{LmSimplex}
Let $e_i$ be the $i$-th standard Euclidean basis, then the set
$$A=\{0,e_1,2e_2,\ldots,(p-1)e_{p-1}\},$$
is both tiling and spectral in $\mathbb Z_p^{p-1}$.
\end{Lm}

\begin{proof}
Let $x=(1,\ldots,1)$, and set $\omega=e^{2\pi i/p}$, then it is easy to verify that
$$\widehat{\mathbf 1}_A(x)=1+\omega+\ldots+\omega^{p-1}=0.$$
Therefore the cyclic group generated by $x$ (denoted by $\bra{x}$) is a spectrum of $A$. To see that $A$ is tiling, we will construct a set $B$ so that $|A|\cdot|B|=|\mathbb Z_p^{p-1}|$ and 
\begin{equation} \label{EqZB}
Z(\widehat{\mathbf 1}_B)=\mathbb Z_p^{p-1}\setminus\bra{x},
\end{equation}
then it will follow from \eqref{EqTile} that $(A,B)$ forms a tiling pair.

Since $p$ is a prime number, each of $1,\ldots,p-1$ is $a$ $(p-1)$-st root of unity in the finite field $\mathbb F_p$, hence all entries of the $(p-1)\times (p-1)$ Fourier matrix (i.e., the Vandermonde matrix generated by these roots of unity $1,\ldots,p-1$) are in $\mathbb Z_p$, and the set of its columns, denoted by $u_1,\ldots,u_{p-1}$, is a basis (the Fourier basis) on $\mathbb Z_p^{p-1}$ (understood as a vector space over $\mathbb F_p$). In particular, since $x$ is a column of it we may without loss of generality order the columns in the way so that $u_1=x$.

Let $B$ be the span (over $\mathbb F_p$) of $u_2,\ldots,u_{p-1}$, then $|B|=p^{p-2}$ is of the correct size, and we simply notice that $B$ and $\bra{x}$ are orthogonal groups of each other, thus \eqref{EqZB} follows immediately from the Poisson summation formula \eqref{EqPoisson}.
\end{proof}

\begin{Cor} \label{CorBasis}
The set 
$$A=\{0,e_1,\ldots,e_{p-1}\},$$
is both tiling and spectral in $\mathbb Z^{p-1}$.
\end{Cor}

\begin{proof}
It suffices to show that $A$ is tiling in $\mathbb Z_p^{p-1}$ since if $(A,B)$ is a tiling pair in $\mathbb Z_p^{p-1}$, then $(A,B\oplus (p\mathbb Z)^{p-1})$ becomes a tiling pair in $\mathbb Z^{p-1}$.

Let 
$$S=\{0,e_1,2e_2,\ldots,(p-1)e_{p-1}\},$$
be the set defined in Lemma \ref{LmSimplex}, and set $D=\diag(1,2,\ldots,{p-1})$. Then $D$ is a group automorphism on $\mathbb Z_p^{p-1}$, its inverse is simply 
$D^{-1}=\diag(1,2^{-1},\ldots,(p-1)^{-1})$ where for each $k\in\{1,\ldots,p-1\}$, $k^{-1}$ is the multiplicative inverse of $k$ in $\mathbb Z_p$, each $k^{-1}$ exists since $p$ is a prime number.

By Lemma \ref{LmSimplex}, $S$ is tiling in $\mathbb Z_p^{p-1}$, therefore $A=D^{-1}S$ is also tiling in $D^{-1}\mathbb Z_p^{p-1}$, but $D^{-1}\mathbb Z_p^{p-1}$ equals $\mathbb Z_p^{p-1}$, thus $A$ is indeed tiling in $\mathbb Z_p^{p-1}$. Its spectrality then follows from Theorem \ref{ThmMain}.
\end{proof}

\begin{Thm} \label{ThmMain2}
If $p$ is a prime number, then any $p$ points in general linear positions in $\mathbb Z^d (d\ge p-1)$ must be both tiling and spectral.
\end{Thm}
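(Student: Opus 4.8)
The plan is to reduce Theorem~\ref{ThmMain2} to the case $d=p-1$ and then combine Lemma~\ref{LmMap} with Lemma~\ref{LmSimplex}. Concretely, suppose $A=\{a_0,a_1,\ldots,a_{p-1}\}\subset\mathbb Z^d$ is in general linear position with $d\ge p-1$. First I would translate by $-a_0$ so that $a_0=0$; since being tiling and being spectral are both invariant under translation, this costs nothing. Now the vectors $v_i=a_i-a_0$ ($1\le i\le p-1$) are linearly independent in $\mathbb Z^d$, so they span a rank-$(p-1)$ free subgroup; after choosing a basis of $\mathbb Z^d$ adapted to a complementary summand, we may regard $v_1,\ldots,v_{p-1}$ as living in a $\mathbb Z^{p-1}$ direct summand of $\mathbb Z^d=\mathbb Z^{p-1}\oplus\mathbb Z^{d-p+1}$. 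Here one must be slightly careful: the $v_i$ are linearly independent over $\mathbb Q$ but need not form a basis of their saturation; however, Lemma~\ref{LmMap} only requires linear independence inside a rank-$(p-1)$ ambient lattice, so we simply take that ambient lattice to be the $\mathbb Z^{p-1}$ summand and apply the lemma there.

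Next I would apply the group automorphism supplied by Lemma~\ref{LmMap} (extended by the identity on the $\mathbb Z^{d-p+1}$ factor) to send $v_1,\ldots,v_{p-1}$ to $(1,0,\ldots,0),(0,2,0,\ldots,0),\ldots,(0,\ldots,0,p-1)$ inside $\mathbb Z^{p-1}$, with $a_0=0$ still at the origin and all points having zero coordinates in the $\mathbb Z^{d-p+1}$ part. Thus, after an invertible affine change of coordinates on $\mathbb Z^d$ (translation followed by a group automorphism), $A$ becomes $A_0\times\{0\}$ where $A_0=\{(0,\ldots,0),(1,0,\ldots,0),\ldots,(0,\ldots,0,p-1)\}\subset\mathbb Z^{p-1}$ is the simplex-type set of Lemma~\ref{LmSimplex}. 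Since affine group automorphisms of $\mathbb Z^d$ preserve both tiling and spectral properties (they carry tiling complements to tiling complements and spectra to spectra, as one checks directly from the characterizations in the Preliminaries), it remains to show $A_0\times\{0\}$ is tiling and spectral in $\mathbb Z^d$.

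For the \emph{tiling} half, $A_0$ tiles $\mathbb Z_p^{p-1}$ by Lemma~\ref{LmSimplex}; pulling this back, $A_0$ tiles $\mathbb Z^{p-1}$ with complement $B_0+(p\mathbb Z)^{p-1}$ where $B_0$ is a set of coset representatives of the tiling complement in $\mathbb Z_p^{p-1}$. Then $A_0\times\{0\}$ tiles $\mathbb Z^d=\mathbb Z^{p-1}\times\mathbb Z^{d-p+1}$ with complement $\big(B_0+(p\mathbb Z)^{p-1}\big)\times\mathbb Z^{d-p+1}$, which is a product tiling. For the \emph{spectral} half, the cleanest route is to note that in $\mathbb Z^{p-1}$ the set $A_0$ has the spectrum $S_0=\tfrac1p\mathbb Z\cdot(1,\ldots,1)/\!\sim$, i.e. more precisely the orthogonality follows because $\widehat{\mathbf 1_{A_0}}$ at the character $e^{2\pi i\langle \cdot,(1,\ldots,1)/p\rangle}$ equals $1+\omega+\cdots+\omega^{p-1}=0$ with $\omega=e^{2\pi i/p}$, exactly as in the proof of Lemma~\ref{LmSimplex}; taking the $p$ characters $\xi\mapsto e^{2\pi i\langle\xi,j(1,\ldots,1,0,\ldots,0)/p\rangle}$ for $j=0,\ldots,p-1$ (the last $d-p+1$ coordinates zero) gives an orthogonal family on $L^2(A_0\times\{0\})$, because the inner product of the $j$-th and $j'$-th reduces to $\sum_{i=0}^{p-1}\omega^{(j-j')i}=0$. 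These are $p=|A|$ mutually orthogonal exponentials, hence a spectrum.

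The main obstacle I anticipate is the lattice bookkeeping in the first step: ``general linear position'' gives only $\mathbb Q$-linear independence of $v_1,\ldots,v_{p-1}$, not that they generate a direct summand of $\mathbb Z^d$, so one should phrase the reduction as choosing \emph{any} rank-$(p-1)$ direct summand $L\le\mathbb Z^d$ containing the sublattice $\langle v_1,\ldots,v_{p-1}\rangle$ has finite index is \emph{not} automatically true, so instead one picks a complement of $\mathbb Z v_1\oplus\cdots\oplus\mathbb Z v_{p-1}$'s saturation, applies Lemma~\ref{LmMap} on that saturation, and then uses the automorphism of $\mathbb Z^d$ that is the chosen one on $L$ and identity off it. Once the coordinates are normalized, everything else is the routine product-tiling and character-orthogonality computation sketched above, together with the (easy, and implicitly used throughout the paper) invariance of tiling and spectrality under group automorphisms of $\mathbb Z^d$. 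Since $d\ge p-1$ is all that is needed for Lemma~\ref{LmMap} and the extra $\mathbb Z^{d-p+1}$ factor only contributes trivially, this completes the argument; the case $d\ge p$ overlaps with \cite[Corollary 2.4]{matolcsi2005} as already noted.
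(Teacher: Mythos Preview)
Your proposal is correct and follows essentially the same reduction as the paper: translate one point to the origin, use Lemma~\ref{LmMap} to normalize the configuration to the simplex $\{0,e_1,2e_2,\ldots,(p-1)e_{p-1}\}$, and then invoke Lemma~\ref{LmSimplex}. The one substantive difference is that the paper deduces spectrality by quoting Theorem~\ref{ThmMain} (tiling $\Rightarrow$ spectral for prime-size tiles) and only proves the tiling half explicitly, whereas you exhibit the spectrum $\{j(1,\ldots,1,0,\ldots,0)/p : 0\le j\le p-1\}$ directly; this makes your proof of Theorem~\ref{ThmMain2} self-contained and independent of Theorem~\ref{ThmMain}, at the modest cost of redoing the orthogonality computation already inside Lemma~\ref{LmSimplex}. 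Your explicit handling of the saturation/direct-summand issue when $d>p-1$ is also a detail the paper glosses over (it applies Lemma~\ref{LmMap}, stated for $\mathbb Z^{p-1}$, inside $\mathbb Z^d$ without comment).
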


\begin{proof}
In light of Theorem \ref{ThmMain}, it suffices to show that these points are tiling in $\mathbb Z^d$.

Without loss of generality we may assume that one of these points is the origin, and we denote the other $p-1$ points by $v_1,\ldots,v_{p-1}$. Let $V$ be an arbitrary linear transform that maps $e_i$ to $v_i$ for $i<=p-1$, such $V$ is a group isomorphism between $\mathbb Z^{p-1}$ and $V\mathbb Z^{p-1}$ since $0, v_1,\ldots,v_{p-1}$ are in general linear positions (which means that $v_1,\ldots,v_{p-1}$ are linearly independent). Further set
$$A=\{0,v_1,\ldots,v_{p-1}\}, \quad E=\{0,e_1,\ldots,e_{p-1}\}.$$

By Corollary \ref{CorBasis}, $E$ is tiling in $\mathbb Z^{p-1}$, thus $A=VE$ is also tiling in the sublattice $V\mathbb Z^{p-1}$. This implies that $A$ is also tiling in $\mathbb Z^d$. Its tiling complement can be produced explicitly: If $E'$ is a tiling complement of $E$ in $\mathbb Z^{p-1}$, and $F$ is the set formed by taking one representative from each coset of $V\mathbb Z^{p-1}$ (viewed as an additive subgroup) in $\mathbb Z^d$, then $B=(VE')\oplus F$ is a tiling complement of $A$.
\end{proof}

Theorem \ref{ThmMain2} produces some interesting consequences, for example we can assert that any $3$ points must be tiling as long as they are not on a line. On the other hand, this also shows that the condition that $p$ points have to be in general linear positions is tight, since $3$ colinear points indeed need not be tiling, e.g., $\{0,3,4\}$ does not tile $\mathbb Z$ (see also \cite{newman1977} for a characterization of tiles with prime power sizes in $\mathbb Z$). 

\section*{Acknowldgement}
The author would like to thank Tao Zhang (XDU) for the hospitality in Xi'an, and the Chern Institute of Mathematics (Tianjin) for hosting while preparing for this paper. The author thanks Jia Zheng (WHU) for mentioning the phenomenon described in Theorem \ref{ThmMain2} at $p=3$ to him, as well as Shilei Fan (CCNU) and Gergely Kiss (ARIM) for short communications that improved Lemma \ref{LmDerived}.

{\small
} 


\begin{thebibliography}{10}
\bibitem{bhattacharya2020} S.Bhattacharya: Periodicity and decidability of tilings of $\mathbb Z^2$. American Journal of Mathematics, 142, 255-266, 2020.
\bibitem{coven1999} E.Coven, A.Meyerowitz: Tiling the integers with translates of one finite set. Journal of Algebra, 212, 161–174, 1999.
\bibitem{donoho1989} D.Donoho, P.Stark: Uncertainty principle and signal processing. SIAM Journal on Applied Mathematics, 49(3), 906-931, 1989.
\bibitem{fallon2022} T.Fallon, G.Kiss, G.Somlai: Spectral sets and tiles in $\mathbb Z_p^2\times \mathbb Z_q^2$. Journal of Functional Analysis, 282(12), 109472, 2022.
\bibitem{fan2016} A.Fan, S.Fan, R.Shi: Compact open spectral sets in $\mathbb Q_p$. Journal of Functional Analysis, 271(12), 3628-3661, 2016.
\bibitem{farkas2006} B.Farkas, M.Matolcsi, P.M{\'o}ra: On Fuglede's conjecture and the existence of universal spectra. Journal of Fourier Analysis and Applications, 12(5), 483-494. 2006.
\bibitem{fuglede1974} B.Fuglede: Commuting self-adjoint partial differential operators and a group theoretic problem. Journal of Functional Analysis, 16, 101-121, 1974.
\bibitem{tao2021} R.Greenfeld, T.Tao: The structure of translational tilings in $\mathbb Z^d$. Discrete Analysis, 16, 2021.
\bibitem{tao2024} R.Greenfeld, T.Tao: A counterexample to the periodic tiling conjecture. Annals of Mathematics, 200, 301-363, 2024.
\bibitem{iosevich2017} A.Iosevich, A.Mayeli, J.Pakianathan: The Fuglede conjecture holds in $\mathbb Z_p\times\mathbb Z_p$. Analysis \& PDE, 10(4), 757-764, 2017.
\bibitem{matolcsi2006} M.Kolountzakis, M.Matolcsi: Complex Hadamard matrices and the spectral set conjecture. Collectanea Mathematica, 57, 281-291, 2006.
\bibitem{kolounzakis2006} M.Kolountzakis, M.Matolcsi: Tiles with no spectra. Forum Mathematicum, 18(3), 519-528, 2006.
\bibitem{laba2022} I.\L{}aba, I.Londner: The Coven–Meyerowitz tiling conditions for $3$ odd prime factors. Inventiones Mathematicae, 232, 365-470, 2022.
\bibitem{laba2025} I.\L{}aba, I.Londner: The Coven-Meyerowitz tiling conditions for 3 prime factors: the even case. Research in the Mathematical Sciences, 12, 43, 2025.
\bibitem{wang1996} J.Lagarias, Y.Wang: Tiling the line with translates of one tile. Inventiones Mathematicae, 124, 341–365, 1996.
\bibitem{wang1997} J.Lagarias, Y.Wang: Spectral sets and factorizations of finite Abelian groups. Journal of Functional Analysis, 145, 73-98, 1997.
\bibitem{malikiosis2022} R.Malikiosis: On the structure of spectral and tiling subsets of cyclic groups. Forum of Mathematics Sigma, 10, 1-42, 2022.
\bibitem{matolcsi2005} M.Matolcsi: Fuglede's conjecture fails in dimension $4$. Proceedings of the American Mathematical Society, 133(10), 3021-3026, 2005.
\bibitem{newman1977} D.Newman: Tesselation of integers. Journal of Number Theory, 9, 107-111, 1977.
\bibitem{smith1990} K.Smith: The uncertainty principle on groups. SIAM Journal on Applied Mathematics, 50(2), 876-882, 1990.
\bibitem{szegedy1998} M. Szegedy, Algorithms to tile the infinite grid with finite clusters. Proceedings of the 39th Annual Symposium on Foundations of Computer Science (FOCS'98), IEEE Computer Society, Los Alamitos, CA, 137-145, 1998.
\bibitem{tao2004} T.Tao: Fuglede’s conjecture is false in $5$ and higher dimensions. Mathematical Research Letters, 11(2-3), 251-258, 2004.
\bibitem{zhang2021} T.Zhang: Fuglede’s conjecture holds in $\mathbb Z_p\times \mathbb Z_{p^n}$. SIAM Journal on Discrete Mathematics, 37(2),  1180-1197, 2023.
\end{thebibliography}
\end{document}